\documentclass[11pt,a4paper,twoside]{amsart}


\usepackage[all,arrow,2cell]{xy}

\setcounter{tocdepth}{1}

\newcommand{\confer}{{\em cf.}\ }

\addtolength{\topmargin}{-1cm} \addtolength{\textheight}{2cm}
\addtolength{\textwidth}{3cm} \addtolength{\oddsidemargin}{-1.5cm}
\addtolength{\evensidemargin}{-1.5cm}

%
%
\newcommand{\ca}{{\mathcal A}}
\newcommand{\cb}{{\mathcal B}}
\newcommand{\cc}{{\mathcal C}}
\newcommand{\cd}{{\mathcal D}}

\newcommand{\ch}{{\mathcal H}}

\newcommand{\ck}{{\mathcal K}}

\newcommand{\cs}{{\mathcal S}}
\newcommand{\ct}{{\mathcal T}}
\newcommand{\cu}{{\mathcal U}}
\newcommand{\cv}{{\mathcal V}}

\newcommand{\opname}[1]{\operatorname{\mathsf{#1}}}

\renewcommand{\mod}{\opname{mod}\nolimits}

\newcommand{\Inj}{\opname{Inj}\nolimits}
\newcommand{\Mod}{\opname{Mod}\nolimits}

\newcommand{\per}{\opname{per}\nolimits}

\newcommand{\tria}{\opname{tria}\nolimits}
\newcommand{\Tria}{\opname{Tria}\nolimits}
\newcommand{\Dif}{\opname{Dif}\nolimits}

\renewcommand{\ker}{\opname{ker}\nolimits}

%
%
\newcommand{\Hom}{\opname{Hom}}

\newcommand{\RHom}{\opname{RHom}}

\newcommand{\Ext}{\opname{Ext}}

\newcommand{\End}{\opname{End}}

\newcommand{\ten}{\otimes}
\newcommand{\lten}{\overset{\boldmath{L}}{\ten}}

\newtheorem{Thm}{Theorem}

\newtheorem{theorem}{Theorem}
\newtheorem*{theorem*}{Theorem}

\newtheorem{corollary}[Thm]{Corollary}

\newtheorem*{conjecture*}{Conjecture}

\newtheorem*{remark*}{Remark}

\newcommand{\ie}{{\em i.e.}}
\newcommand{\eg}{{\em e.g.}}

\title{Recollements from generalized tilting}

\author{Dong Yang}
\address{
Dong Yang\\Max-Planck-Institut f\"ur Mathematik in Bonn, Vivatsgasse
7, 53111 Bonn, Germany} \email{yangdong98@mails.thu.edu.cn}

\date{Last modified on \today.}

\begin{document}

\begin{abstract} Let $\ca$ be a small dg category over a field $k$ and let $\cu$ be
a small full subcategory of the derived category $\cd\ca$ which
generate all free dg $\ca$-modules. Let $(\cb,X)$ be a standard lift
of $\cu$. We show that there is a recollement such that its middle
term is $\cd\cb$, its right term is $\cd\ca$, and the three functors
on its right side are constructed from $X$. This applies to the pair
$(A,T)$, where $A$ is a $k$-algebra and $T$ is a good $n$-tilting
module, and we obtain a result of Bazzoni--Mantese--Tonolo. This
also applies to the pair $(\ca,\cu)$, where $\ca$ is an augmented dg
category and $\cu$ is the category of `simple' modules, \eg~ $\ca$
is a finite-dimensional algebra or the Kontsevich--Soibelman
$A_\infty$-category associated to a quiver with potential.\\
{\bf 2010 Mathematics Subject Classifications}: 18E30, 16E45.
\end{abstract}

\maketitle

A \emph{recollement} of triangulated categories is a diagram of
triangulated categories and triangle functors
\[\xymatrix{\ct''\ar[rr]^{i_*}&&\ct\ar[rr]^{j^*}\ar@/^20pt/[ll]^{i^!}\ar@/_20pt/[ll]_{i^*}&&\ct'\ar@/^20pt/[ll]^{j_*}\ar@/_20pt/[ll]_{j_!}},\]
such that \begin{itemize} \item $(i^*,i_*,i^!)$ and $(j_!,j^*,j_*)$
are adjoint triples;
\item $i_*,j_*,j_!$ are fully faithful;
\item $j^*\circ i_*=0$;
\item for every object $X$ of $\ct$ there are two triangles
\[\xymatrix{i_*i^!\ar[r] & X \ar[r] & j_*j^*X\ar[r]&}\text{ and }\xymatrix{j_!j^*X\ar[r] & X \ar[r] & i_*i^*X\ar[r] &},\]
where the four morphisms are the units and counits.
\end{itemize}
We also say that this is a recollement of $\ct$ in terms of $\ct'$
and $\ct''$. This notion was introduced by
Beilinson--Bernstein--Deligne in \cite{BeilinsonBernsteinDeligne82} 
in geometric contexts, where stratifications of varieties induce
recollements of derived categories of sheaves.

In algebraic contexts, recollements are closely related to tilting
theory. Let $A$ be a ring. Let $\cd(A)=\cd(\Mod A)$ denote the
derived category of (right) $A$-modules, and $\per A$ denote the
triangulated subcategory of $\cd(A)$ generated by the free module of
rank $1$. An object $T$ of $\per A$ is called a \emph{partial
tilting complex} if $\Hom_{\cd(A)}(T,\Sigma^n T)=0$, and a
\emph{tilting complex} if in addition $\tria(T)=\per A$, where
$\tria(T)$ is the triangulated subcategory of $\cd(A)$ generated by
$T$. Rickard's Morita theorem for derived categories states that the
standard functors associated to a tilting complex $T$ over $A$ are
triangle equivalences between $\cd(A)$ and $\cd(\End_{\cd(A)}(T))$,
see~\cite{Rickard89}. Later in~\cite{Koenig91}, Koenig proved that
under certain conditions a partial tilting complex $T$ over $A$
yields a recollement of $\cd(A)$ in terms of $\cd(\End_{\cd(A)}(T))$
and a third derived category which measures how far the associated
standard functors are from being equivalences (see
also~\cite{Happel92}~\cite{Miyachi03}). In this sense, a recollement
of derived categories can be viewed as a natural generalization of a
derived equivalence. The relation between tilting theory and
recollements of derived categories has been further studied
in~\cite{AngeleriKoenigLiu10}~\cite{ChenXi10}. The dg version of
Rickard's theorem was developed by Keller in~\cite{Keller94}, and
the result of Koenig was generalized to the dg setting by
J{\o}rgensen~\cite{Jorgensen06} and
Nicol\'{a}s--Saor\'{\i}n~\cite{NicolasSaorin09},
where the role of partial tilting complexes is played by compact
objects.

In this paper we deal with a situation which is `dual' to the one
in~\cite{Koenig91}~\cite{Jorgensen06}~\cite{NicolasSaorin09}.
Starting from a dg category $\ca$ and a set of objets in the derived
category $\cd\ca$ which generates all the compact objects, we
construct a dg category $\cb$ together with a recollement of
$\cd\cb$ in terms of $\cd\ca$ and another derived category, see
Theorem~\ref{t:recollement}. We identify this third derived category
with a certain known category in the special case when $\ca$ is the
Kontsevich--Soibelman $A_\infty$-category associated to a quiver
with potential (Corollary~\ref{c:ginzburg-algebra}) or when $\ca$ is
a finite-dimensional self-injective algebra
(Corollary~\ref{c:self-injective-algebra}). The motivation for our
study was to have a better understanding of the `exterior' case of
the Koszul duality (Corollary~\ref{c:koszul-duality}) and a result
of Bazzoni--Mantese--Tonolo which says that the right derived
Hom-functor associated to an (infinitely generated) good tilting
module is fully faithful (Corollary~\ref{c:large-tilting}).

\section{The main result}

Let $k$ be a field and let $\ca$ be a small dg $k$-category. Denote
by $\Dif\ca$ the dg category of (right) dg $\ca$-modules. A dg
$\ca$-module $M$ is \emph{$\ck$-projective} if the dg functor
$\Dif\ca(M,?)$ preserves acyclicity.
For example, the free modules $A^{\wedge}=\Dif\ca(?,A)$, $A\in\ca$,
are $\ck$-projective. Let $\cd\ca$ denote the derived category of
$\ca$, which is triangulated with suspension functor $\Sigma$ the
shift functor. For a set of objects or a subcategory $\cs$ of
$\cd\ca$ we denote by $\tria\cs$ the smallest triangulated
subcategory of $\cd\ca$ containing all objects in $\cs$ and closed
under taking direct summands. Let
$\per\ca=\tria(A^{\wedge},A\in\ca)$. An object $M$ of $\cd\ca$ is
\emph{compact} if the functor $\cd\ca(M,?)$ commutes with infinite
(set-indexed) direct sums, or equivalently, if $M$ belongs to
$\per\ca$. See~\cite{Keller94}.

Let $\cu$ be a full small subcategory of $\cd\ca$ such that
\begin{eqnarray} \tria\cu\supseteq\per\ca. \label{condition}\end{eqnarray}
 Let $(\cb,X)$ be
a \emph{standard lift} of $\cu$ (\cite[Section 7]{Keller94}).
Precisely, $\cb$ is a dg subcategory of $\Dif\ca$ consisting of
$\ck$-projective resolutions over $\ca$ of objects of $\cu$ (to
avoid confusion, for each object $B$ of $\cb$ we will denote by
$U_B$ the corresponding dg $\ca$-module) and $X$ is the dg
$\cb^{op}\ten\ca$-module defined by $X(B,A)=U_B(A)$. It induces a
pair of adjoint dg functors and a pair of adjoint triangle functors
\[\xymatrix{\Dif\cb\ar@<.7ex>[r]^{T_X}&\Dif\ca\ar@<.7ex>[l]^{H_X},}\qquad \xymatrix{\cd\cb\ar@<.7ex>[r]^{\mathbf{L}T_X}&\cd\ca\ar@<.7ex>[l]^{\mathbf{R}H_X}.}\]
When $\ca$ and $\cb$ are dg $k$-algebras (\ie~dg $k$-categories with
one object), the functors $\mathbf{L}T_X$ and $\mathbf{R}H_X$ are
usually written as $?\lten X$ and $\RHom(X,?)$.

Let $X^T$ be the dg $\ca^{op}\ten\cb$-module defined by
\[X^T(A,B)=\Dif\ca(X^B,A^{\wedge}),\]
where for $B\in\cb$, $X^B$ is by definition the dg $\ca$-module
$X(B,?)$. From the definition of $X$ we see that $X^B=U_B$. The main
result of this paper is

\begin{theorem}\label{t:recollement} Assume notations as above.
There is a dg $k$-category $\cc$ and  a recollement of triangulated
categories
\[\xymatrix{\cd\cc\ar[rr]^{i_*}&&\cd\cb\ar[rr]^{j^*}\ar@/^20pt/[ll]^{i^!}\ar@/_20pt/[ll]_{i^*}&&\cd\ca\ar@/^20pt/[ll]^{j_*}\ar@/_20pt/[ll]_{j_!}},\]
where the adjoint triple $(i^*,i_*,i^!)$ is defined by a dg functor
$F:\cb\rightarrow\cc$ (which is bijective on objects) such that
$i_*=F^*:\cd\cc\rightarrow\cd\cb$ is the pull-back functor, and the
adjoint triple $(j_!,j^*,j_*)$ is given by
\begin{eqnarray*}
j_!&=&\mathbf{L}T_{X^T},\\
j^*&=&\mathbf{R}H_{X^T}\hspace{5pt}\simeq\hspace{5pt}\mathbf{L}T_X,\\
j_*&=&\mathbf{R}H_X.
\end{eqnarray*}
\end{theorem}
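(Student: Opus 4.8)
The plan is to construct the right-hand adjoint triple directly from the bimodules $X$ and $X^T$, to obtain the left-hand triple and the category $\cc$ from the kernel of $j^*$, and to assemble the recollement from the localization formalism. Throughout I would work with $\ck$-projective models so that the derived functors are computed by the underlying dg functors.

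First I would record the two standard derived adjunctions attached to the bimodules, namely $(\mathbf{L}T_X,\mathbf{R}H_X)$ between $\cd\cb$ and $\cd\ca$ and $(\mathbf{L}T_{X^T},\mathbf{R}H_{X^T})$ between $\cd\ca$ and $\cd\cb$. Since $\cb$ is a full dg subcategory of $\Dif\ca$ and each $U_B=X^B$ is $\ck$-projective, a direct computation gives $\mathbf{L}T_X(B^{\wedge})\cong U_B$ and, because $\Dif\cb(B^{\wedge},B'^{\wedge})=\cb(B,B')=\Dif\ca(U_B,U_{B'})$, also $\mathbf{R}H_X(U_B)\cong B^{\wedge}$. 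Hence the counit $\mathbf{L}T_X\mathbf{R}H_X\to\id$ is invertible on every object of $\cu$. As both functors preserve coproducts and triangles, the class of objects on which the counit is invertible is a localizing subcategory; since $\tria\cu\supseteq\per\ca$ forces $\Tria\cu=\cd\ca$, the counit is invertible everywhere, so $j_*=\mathbf{R}H_X$ is fully faithful and $\mathbf{L}T_X$ is a Verdier localization.

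Next I would establish the identification $\mathbf{R}H_{X^T}\simeq\mathbf{L}T_X$. Evaluating at a free module one has $\mathbf{R}H_{X^T}(B^{\wedge})(A)=\RHom_\cb((X^T)^A,B^{\wedge})$, and the identities $(X^T)^A\cong\mathbf{R}H_X(A^{\wedge})=j_*(A^{\wedge})$ and $B^{\wedge}\cong\mathbf{R}H_X(U_B)=j_*(U_B)$, together with the full faithfulness of $j_*$, turn this into $\RHom_\ca(A^{\wedge},U_B)=U_B(A)=\mathbf{L}T_X(B^{\wedge})(A)$; a compatibility check of the canonical natural transformation then upgrades this to an isomorphism of functors on all of $\cd\cb$. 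Using this identification, $j_!=\mathbf{L}T_{X^T}$ is left adjoint to $j^*$, and I would prove it fully faithful by checking that the unit $\id\to j^*j_!$ is invertible on the compact generators $A^{\wedge}$: there $j_!(A^{\wedge})=\mathbf{L}T_{X^T}(A^{\wedge})\cong(X^T)^A\cong\mathbf{R}H_X(A^{\wedge})=j_*(A^{\wedge})$, so $j^*j_!(A^{\wedge})\cong j^*j_*(A^{\wedge})\cong A^{\wedge}$, and since $j_!$ and $j^*$ preserve coproducts and triangles this propagates to all of $\cd\ca$.

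It remains to produce $\cc$ and the dg functor $F$. I would set $\ct''=\ker j^*=\ker\mathbf{L}T_X$, a localizing subcategory of the compactly generated category $\cd\cb$, and realize it as $\cd\cc$ explicitly: for each object $B$ take a $\ck$-projective model $P_B$ of $i^*(B^{\wedge})=\opname{cone}(j_!j^*B^{\wedge}\to B^{\wedge})$, let $\cc$ be the full dg subcategory of $\Dif\cb$ on the $P_B$, and let $F\colon\cb\to\cc$ send $B$ to $P_B$, so that $F$ is bijective on objects and $i_*=F^*$ is the pull-back. The key technical point, and the step I expect to be the main obstacle, is to verify that $F$ is a homological epimorphism, i.e. that the multiplication $\cc\lten_\cb\cc\to\cc$ is a quasi-isomorphism; this is what guarantees that $i_*=F^*$ is fully faithful with $(i^*,i_*,i^!)=(\mathbf{L}F_!,F^*,\mathbf{R}F_*)$ an adjoint triple and with essential image exactly $\ker j^*$. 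Granting this, $j^*i_*=0$ is immediate, and the two defining triangles follow from the standard fact that a localization sequence whose quotient functor admits both a fully faithful left and right adjoint, which is precisely the situation for $j^*$ with adjoints $j_!$ and $j_*$, assembles into a recollement with middle term $\cd\cb$, right term $\cd\ca$, and left term $\ker j^*=\cd\cc$.
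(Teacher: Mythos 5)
You have two genuine gaps, one in the right-hand adjoint triple and one in the left-hand one. First, your opening step claims that the class of objects on which the counit $\mathbf{L}T_X\mathbf{R}H_X\to\id$ is invertible is a \emph{localizing} subcategory ``as both functors preserve coproducts''. But $\mathbf{R}H_X$ does not preserve coproducts in general: that would require each $U_B\in\cu$ to be compact in $\cd\ca$, and the whole point of the theorem is that $\cu$ need not consist of compact objects --- in the Bazzoni--Mantese--Tonolo application $\cu=\{T\}$ with $T$ an infinitely generated good tilting module, which is not perfect, so $\RHom_A(T,?)$ genuinely fails to commute with coproducts. The invertibility class is only thick, so your computation on $\cu$ yields counit invertibility on $\tria\cu\supseteq\per\ca$ --- which is in fact all your later steps need, and is exactly what the paper proves (phrased as: $\mathbf{R}H_X|_{\tria\cu}$ is a quasi-inverse of the equivalence $\mathbf{L}T_X|_{\per\cb}\colon\per\cb\iso\tria\cu$). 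The \emph{global} full faithfulness of $j_*=\mathbf{R}H_X$ that you assert at this stage is essentially the hard conclusion of the theorem (it \emph{is} the Bazzoni--Mantese--Tonolo theorem in Corollary~\ref{c:large-tilting}); it is true, but it comes out at the end, formally from $j^*j_!\cong\id$ once the recollement is assembled, not as an input. Since your second paragraph invokes ``the full faithfulness of $j_*$'' to compute $\RHom_\cb((X^T)^A,B^{\wedge})$, your argument as written is circular, though repairable by restricting everything to $\tria\cu$.

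The larger gap is the left-hand side. Your $F\colon\cb\to\cc$, $B\mapsto P_B$ with $P_B$ a model of the cone of $j_!j^*B^{\wedge}\to B^{\wedge}$, is not actually a dg functor as described: cones are not functorial at the dg level and you give no action on morphisms; and you explicitly ``grant'' the homological-epimorphism property that you yourself flag as the main obstacle. That verification is precisely the content of \cite[Theorem 5]{NicolasSaorin09}, which is what the paper invokes: the paper reduces the entire theorem to (a) $\mathbf{L}T_{X^T}$ fully faithful and (b) $\mathbf{R}H_{X^T}\simeq\mathbf{L}T_X$, and outsources the existence of $\cc$, of $F$ bijective on objects with $i_*=F^*$, and of the adjoint triple $(i^*,i_*,i^!)$ to Nicol\'as--Saor\'{\i}n. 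Without either citing that theorem or proving the homological epimorphism claim, your proposal does not establish the part of the statement concerning $F$ at all. A related rigor point: in proving $j_!$ fully faithful you argue with object-level isomorphisms ($j_!(A^{\wedge})\cong j_*(A^{\wedge})$, hence $j^*j_!(A^{\wedge})\cong A^{\wedge}$), but full faithfulness requires the \emph{unit itself} to be invertible, and an abstract isomorphism of objects does not give this. The paper secures the needed identifications at the level of natural transformations and bimodules via \cite[Lemmas 6.1 b) and 6.2 a)]{Keller94}, resolving $X^T$ by a $\ck$-projective bimodule $Y$ and proving that $Y^T$ and $X$ are quasi-isomorphic as $\cb^{op}\ten\ca$-modules; this is exactly the ``compatibility check'' you defer, and it is where the actual work in part (b) lies.
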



\begin{proof}
In view of~\cite[Theorem 5]{NicolasSaorin09},
it suffices to prove
\begin{itemize} \item[(a)] $\mathbf{L}T_{X^T}$ is fully faithful,
\item[(b)]$\mathbf{R}H_{X^T}\simeq\mathbf{L}T_X$.
\end{itemize}

The proof for (a) is the same as the proof of~\cite[Lemma 10.5 the
`exterior' case c)]{Keller94}. Since $(\cb,X)$ is a lift, the
restriction of $\mathbf{L}T_X$ on the perfect derived category
$\per\cb$ is fully faithful, and its essential image is $\tria\cu$
(see~\cite[Section 7.3]{Keller94}):
\[\xymatrix{\mathbf{L}T_X|_{\per\cb}:\per\cb\ar[r]^(0.62){\sim}&\tria\cu.}\]
It is clear that $\mathbf{R}H_X$ takes an object of $\tria\cu$ into
$\per\cb$. Therefore, the restriction $\mathbf{R}H_X|_{\tria\cu}$ is
a quasi-inverse of $\mathbf{L}T_X|_{\per\cb}$, and hence is fully
faithful. It follows from~\cite[Lemma 6.2 a)]{Keller94} that the
restriction $\mathbf{L}T_{X^T}|_{\per\ca}$ is naturally isomorphic
to the restriction of $\mathbf{R}H_X|_{\per\ca}$, which is fully
faithful by condition (\ref{condition}). Condition (\ref{condition})
also implies that $\mathbf{R}H_X(A^{\wedge})=(X^T)^A$ ($A\in\ca$)
belongs to $\per\cb$, and hence is compact by~\cite[Theorem
5.3]{Keller94}. Now applying~\cite[Lemma 4.2 b)]{Keller94}, we
obtain that $\mathbf{L}T_{X^T}$ is fully faithful, finishing the
proof of (a).

Let $Y\rightarrow X^T$ be a $\ck$-projective resolution of dg
$\ca^{op}\ten\cb$-modules. Then the specialization $Y^A\rightarrow
(X^T)^A$ is a $\ck$-projective resolution of dg $\cb$-modules for
any object $A$ of $\ca$. Recall that $(X^T)^A$ is compact.
 It follows from~\cite[Lemma 6.2 a)]{Keller94} that
$\mathbf{L}T_{Y^T}\simeq \mathbf{R}H_Y$.
By~\cite[Lemma 6.1 b)]{Keller94}, in order to prove
$\mathbf{R}H_{X^T}\simeq\mathbf{L}T_X$, it suffices to prove that as
dg $\cb^{op}\ten \ca$-modules $Y^T$ and $X$ are quasi-isomorphic.
Let $A\in\ca$ and $B\in\cb$. We have $H_X(U_B)=B^{\wedge}$, and
hence
\begin{eqnarray*}
Y^T(A,B)&=&\Dif\cb(Y^A,B^{\wedge})\\
&=&\Dif\cb(Y^A,H_X(U_B))\\
&\cong&\Dif\ca(T_X(Y^A),U_B). \end{eqnarray*} The composition
$T_X(Y^A)\rightarrow T_X((X^T)^A)=T_X\circ H_X(A^\wedge)\rightarrow
A^{\wedge}$ is exactly the counit
$\mathbf{L}T_X\circ\mathbf{R}H_X(A^{\wedge})\rightarrow A^{\wedge}$,
which is an isomorphism in $\cd\ca$ because the restriction of
$\mathbf{R}H_X$ on $\per\ca$ is fully faithful. Moreover, both
$T_X(Y^A)$ and $A^{\wedge}$ are $\ck$-projective dg $\ca$-modules.
Therefore we have
\begin{eqnarray*}Y^T(A,B)
&\stackrel{q.is}{\leftarrow}&\Dif\ca(A^{\wedge},U_B)\\
&=&U_B(A)\\
&=&X(A,B).
\end{eqnarray*}
Further, every morphism in the above is functorial in both $A$ and
$B$. This completes the proof of (b).
\end{proof}


\begin{corollary}[\cite{BazzoniManteseTonolo09}]\label{c:large-tilting}
Let $A$ be a $k$-algebra, and $n$ be a positive integer. Let $T$ be
a \emph{good $n$-tilting module}, \ie~ $T$ is an $A$-module such
that
\begin{itemize}
\item[(T1)] the
projective dimension of $T$ is less than or equal to $n$;
\item[(T2)] $\Ext_A^i(T,T^{(\alpha)})=0$ for any integer $i>0$ and for any
cardinal $\alpha$;
\item[(T3)] there is an exact sequence
\[\xymatrix{0\ar[r] & A\ar[r] & T^0\ar[r] & T^1\ar[r] &\ldots \ar[r] & T^n\ar[r] & 0},\]
where $T^0,\ldots,T^n$ are direct summands of direct sums of finite
copies of $T$.
\end{itemize} Put $B=\End_A(T)$. Then the right derived functor
$\RHom_A(T,?):\cd(A)\rightarrow\cd(B)$ is fully faithful, and
$\cd(A)$ is triangle equivalent to the triangle quotient of $\cd(B)$
by the kernel of the left derived functor $?\lten_B T$.
\end{corollary}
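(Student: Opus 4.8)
The plan is to deduce the corollary from Theorem~\ref{t:recollement} applied to $\ca=A$, regarded as a dg algebra concentrated in degree $0$, with $\cu$ the full subcategory of $\cd(A)$ whose single object is $T$. The first task is then to verify condition~(\ref{condition}) for this choice of $\cu$. Using the exact sequence in (T3), the stalk complex $A$ is quasi-isomorphic in $\cd(A)$ to the complex $(T^0\to\cdots\to T^n)$ placed in degrees $0,\ldots,n$. Each $T^i$ is a direct summand of a finite direct sum of copies of $T$, so this complex lies in $\tria\cu$; hence $A^{\wedge}\in\tria\cu$ and $\per A=\tria(A^{\wedge})\subseteq\tria\cu$, as required.

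Next I would identify the dg category $\cb$ and the two derived functors. Here $\cb$ is the dg endomorphism algebra of a $\ck$-projective resolution of $T$, with cohomology $H^i(\cb)=\Hom_{\cd(A)}(T,\Sigma^i T)$. This group is $B=\End_A(T)$ in degree $0$, it vanishes for $i>0$ by (T2) with $\alpha=1$, and it vanishes for $i<0$ because $T$ is a module; thus $\cb$ is quasi-isomorphic to $B$ and $\cd\cb\simeq\cd(B)$. Under this identification $X$ becomes the $B$-$A$-bimodule ${}_BT_A$, so that $\mathbf{L}T_X\simeq ?\lten_B T$ and $\mathbf{R}H_X\simeq\RHom_A(T,?)$.

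Both assertions then fall out of the recollement produced by the theorem. By the recollement axioms the functor $j_*=\mathbf{R}H_X\simeq\RHom_A(T,?)$ is fully faithful, which is the first claim. For the second, $j^*=\mathbf{L}T_X\simeq ?\lten_B T$ is a Verdier localization whose kernel is the essential image of $i_*$, and it therefore induces a triangle equivalence $\cd(B)/\ker(?\lten_B T)\simeq\cd(A)$. The step I expect to require the most care is the translation of the tilting hypotheses into the hypotheses of Theorem~\ref{t:recollement}: condition (T3) is exactly what places $A$ in $\tria(T)$, and condition (T2) is exactly what makes the dg algebra $\cb$ formal, so that the middle term $\cd\cb$ of the recollement is genuinely $\cd(B)$. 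Once these two identifications are in place, no further homological computation is needed.
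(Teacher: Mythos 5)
Your proposal is correct and follows essentially the same route as the paper: apply Theorem~\ref{t:recollement} to $\cu=\{T\}$, deduce condition~(\ref{condition}) from (T3), use (T2) to identify $\cd\cb$ with $\cd(B)$, and read off both claims from the fact that $j_*$ is fully faithful and $j^*$ is a Verdier localization with kernel the image of $i_*$. The only point where the paper is tidier is your sentence ``under this identification $X$ becomes the $B$-$A$-bimodule $T$'': the paper makes this immediate by choosing $X$ to be a projective resolution of $T$ over $B^{op}\ten_k A$ (so that the representation map $B\rightarrow\Dif A(X,X)$ is a quasi-isomorphism by (T2) and $X\simeq T$ as bimodules by construction, invoking \cite[Lemma 6.1 b)]{Keller94}), whereas your formality argument for $\cb$ would still need a small transport-of-structure step to upgrade the algebra quasi-isomorphism to one of bimodules.
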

\begin{proof} Let $\cu$ be the full subcategory of $\cd(A)$ consisting of one object $T$.
Then the condition (T3) implies the condition (\ref{condition}). Let
$X$ be a projective resolution of $T$ over $B^{op}\ten_k A$, and let
$\tilde{B}$ be the dg $k$-algebra $\Dif A(X,X)$. Then $X$ is
$\ck$-projective over $A$, and $(\tilde{B},X)$ is a standard lift of
$T$.  Thanks to (T2), the representation map $B\rightarrow\tilde{B}$
of the dg $B$-$A$-bimodule $X$ is a quasi-isomorphism, inducing
mutually quasi-inverse triangle equivalences
$?\lten_{\tilde{B}}\tilde{B}=\RHom_{\tilde{B}}(\tilde{B},?):\cd(\tilde{B})\rightarrow\cd(B)$
and $?\lten_B \tilde{B}:\cd(B)\rightarrow\cd(\tilde{B})$. Now
applying Theorem~\ref{t:recollement} and composing the resulting
recollement with the above triangle equivalences, we obtain a
recollement
\[\xymatrix{\cd(C)\ar[rrr]&&&\cd(B)\ar[rrr]^{\RHom_B(X^T,?)\simeq ?\lten_B
X}\ar@/^30pt/[lll]\ar@/_30pt/[lll]&&&\cd(A)\ar@/^30pt/[lll]^{\RHom_A(X,?)}\ar@/_30pt/[lll]_{?~\lten_A
X^T}},\] where $C$ is a dg $k$-algebra. Since $X$ and $T$ are
quasi-isomorphic as dg $B^{op}\ten_k A$-modules, we have natural
isomorphisms $?\lten_B X\simeq ~?\lten_B T$ and $\RHom_A(X,?)\simeq
\RHom_A(T,?)$ (\cite[Lemma 6.1 b)]{Keller94}). The desired result
follows at once.
\end{proof}

\begin{remark*}
\begin{itemize}
\item[a)] This result is due to Bazzoni~\cite{Bazzoni09} for $n=1$ and
Bazzoni--Mantese--Tonolo~\cite{BazzoniManteseTonolo09} for general
$n$ for all rings $A$.
\item[b)] By Theorem~\ref{t:recollement}, the left half of the recollement in
the proof is induced from a dg homomorphism $B\rightarrow C$. For
the case $n=1$ and for all rings $A$ Chen--Xi obtained
in~\cite{ChenXi10} such a recollement with $C$ being an ordinary
ring (so that the map $B\rightarrow C$ becomes a homomorphism of
rings). They used some results in~\cite{AngeleriKoenigLiu10} and
many other results such as the homological properties of the tilting
module $T$.
\end{itemize}
\end{remark*}

To state the next corollary, we need to introduce some notions. Let
$\ca$ be an \emph{augmented} dg $k$-category (\cite[Section
10.2]{Keller94}), \ie
\begin{itemize}
\item distinct objects of $\ca$ are non-isomorphic,
\item for each $A\in\ca$, a dg module $\bar{A}$ is given such that
$H^0\bar{A}(A)\cong k$ and $H^n\bar{A}(A')$ whenever $n\neq 0$ or
$A'\neq A$.
\end{itemize}
Let $(\ca^*,X)$ be a standard lift of
$\cu=\{\bar{A}|A\in\ca\}\subset\cd\ca$. By abuse of language, we
call the dg $k$-category $\ca^*$ the \emph{Koszul dual} of $\ca$.
Assume that the condition (\ref{condition}) holds, \eg~ this happens
in the `exterior' case in~\cite[Section 10.5]{Keller94}.

\begin{corollary}\label{c:koszul-duality}
Assume notations as above. There is a recollement of derived
categories of dg $k$-categories
\[\xymatrix{\cd\cc\ar[rrr]&&&\cd\ca^*\ar[rrr]^{\mathbf{R}H_{X^T}\simeq \mathbf{L}T_X}\ar@/^20pt/[lll]\ar@/_20pt/[lll]&&&\cd\ca\ar@/^20pt/[lll]^{\mathbf{R}H_X}\ar@/_20pt/[lll]_{\mathbf{L}T_{X^T}}}.\]
\end{corollary}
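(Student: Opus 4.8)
The plan is to recognize that Corollary~\ref{c:koszul-duality} is essentially a direct specialization of Theorem~\ref{t:recollement}, so the work lies almost entirely in checking that the hypotheses of the theorem are met in this setting. First I would observe that $\ca$ being augmented gives us, for each object $A\in\ca$, the distinguished dg module $\bar{A}$ with $H^0\bar{A}(A)\cong k$ and $H^n\bar{A}(A')=0$ for $(n,A')\neq(0,A)$; these are precisely the `simple' modules, and we have set $\cu=\{\bar{A}\mid A\in\ca\}$. The hypothesis of the corollary explicitly assumes that condition~(\ref{condition}), namely $\tria\cu\supseteq\per\ca$, holds (and remarks that this is realized in the `exterior' case of~\cite[Section 10.5]{Keller94}). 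Thus $\cu$ is a full small subcategory of $\cd\ca$ satisfying exactly the standing hypothesis of Theorem~\ref{t:recollement}.

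Next I would identify the standard lift. By definition $(\ca^*,X)$ is a standard lift of $\cu$, so $\ca^*$ plays the role of the dg category $\cb$ in the theorem, and $X$ is the associated dg $(\ca^*)^{op}\ten\ca$-module. With this dictionary $\cb=\ca^*$ in place, Theorem~\ref{t:recollement} applies verbatim and produces a dg $k$-category $\cc$ together with a recollement of $\cd\ca^*$ in terms of $\cd\ca$ and $\cd\cc$, in which the right-hand adjoint triple $(j_!,j^*,j_*)$ is given by $j_!=\mathbf{L}T_{X^T}$, $j^*=\mathbf{R}H_{X^T}\simeq\mathbf{L}T_X$, and $j_*=\mathbf{R}H_X$. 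Rewriting these functors with $\cb$ replaced by $\ca^*$ yields exactly the labelled arrows displayed in the statement: the middle term is $\cd\ca^*$, the right term is $\cd\ca$, the functor $\cd\ca^*\to\cd\ca$ is $\mathbf{R}H_{X^T}\simeq\mathbf{L}T_X$, and the two functors back are $\mathbf{R}H_X$ and $\mathbf{L}T_{X^T}$.

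The only real content is therefore the verification that the various objects constructed from the augmentation data genuinely satisfy the abstract hypotheses, and this is already guaranteed: the augmentation ensures $\cu$ is a well-defined full small subcategory (distinct objects of $\ca$ being non-isomorphic keeps the $\bar{A}$ distinguishable), condition~(\ref{condition}) is assumed outright, and the existence of the standard lift $(\ca^*,X)$ is built into the definition of the Koszul dual. I expect the only point requiring any care — and the main potential obstacle — is making sure that condition~(\ref{condition}) is legitimately available in the intended applications rather than merely assumed; this is precisely why the statement invokes the `exterior' case of~\cite[Section 10.5]{Keller94}, where $\tria\cu\supseteq\per\ca$ is known to hold, so that the corollary is not vacuous. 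Granting this, the proof reduces to a single sentence: apply Theorem~\ref{t:recollement} to the pair $(\ca,\cu)$ with standard lift $(\ca^*,X)$.

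\begin{proof}
This is a direct application of Theorem~\ref{t:recollement}. By hypothesis $\cu=\{\bar{A}\mid A\in\ca\}$ is a full small subcategory of $\cd\ca$ satisfying condition~(\ref{condition}), and $(\ca^*,X)$ is a standard lift of $\cu$. Taking $\cb=\ca^*$ in Theorem~\ref{t:recollement} produces a dg $k$-category $\cc$ and a recollement of $\cd\ca^*$ in terms of $\cd\ca$ and $\cd\cc$, whose right-hand adjoint triple is $(j_!,j^*,j_*)=(\mathbf{L}T_{X^T},\ \mathbf{R}H_{X^T}\simeq\mathbf{L}T_X,\ \mathbf{R}H_X)$. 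These are exactly the functors displayed in the statement.
\end{proof}
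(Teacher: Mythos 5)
Your proposal is correct and matches the paper's own proof, which simply notes that the corollary is a direct consequence of Theorem~\ref{t:recollement} applied to the pair $(\ca,\cu)$ with standard lift $(\ca^*,X)$, condition~(\ref{condition}) being part of the standing hypotheses. Your additional verification that the augmentation data furnish a legitimate instance of the theorem's hypotheses is a fuller write-up of the same one-line argument, not a different route.
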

\begin{proof}
This is a direct consequence of Theorem~\ref{t:recollement}.
\end{proof}

\section{The left term}
As in the preceding section, we let $k$ be a field, $\ca$ be a small
dg $k$-category, $\cu$ be a full small subcategory of the derived
category $\cd\ca$ such that $\tria\cu\supseteq \per\ca$, and let
$(\cb,X)$ be a standard lift of $\cu$. Theorem~\ref{t:recollement}
says that there is a recollement of $\cd\cb$ in terms of $\cd\ca$
and a third derived category $\cd\cc$, where $\cc$ is a dg
$k$-category whose objects are in bijection with the objects of
$\cu$.

Let $\cv=\{(X^T)^A|A\in\ca\}\subset\cd\cb$. From the proof of the
theorem we obtain a commutative diagram
\[\xymatrix{\mathbf{R}H_X|_{\tria\cu}:&\tria\cu\ar[rr]^{\sim} && \per\cb\\
\mathbf{R}H_X|_{\per\ca}:&\per\ca\ar[rr]^{\sim}\ar@{^{(}->}[u] &&
\tria\cv.\ar@{^{(}->}[u]}\] Therefore $\mathrm{R}H_X$ induces a
triangle equivalence between the triangle quotient categories
\[\xymatrix{\tria\cu/\per\ca\ar[rr]^{\sim}&&\per\cb/\tria\cv}.\]
For a triangulated category $\ct$, let $\ct^c$ denote the
subcategory of compact objects in $\ct$. Let $\Tria\cv$ be the
localizing subcategory of $\cd\cb$ generated by the objects in
$\cv$. We have $(\cd\cb)^c=\per\cb$, and $(\Tria\cv)^c=\tria\cv$.
Thus by~\cite[Theorem 2.1]{Neeman92a}, the category
$(\cd\cb/\Tria\cv)^c$ is triangle equivalent to the idempotent
completion of $\per\cb/\tria\cv$. Since the essential image of
$\mathbf{L}T_{X^T}$ is exactly $\Tria\cv$, it follows that $\cd\cc$
is triangle equivalent to the triangle quotient $\cd\cb/\Tria\cv$,
and hence is an `unbounded version' of
$\tria\cu/\per\ca\cong\per\cb/\tria\cv$. Apparently, $\cd\cc$
vanishes if and only if so does $\tria\cu/\per\ca$, in which case
$\cu$ consists of a set of compact generators for $\cd\ca$.

In the following two special cases, we are able to identify $\cd\cc$
with a certain known category (however, the dg category $\cc$ is not
easy to describe).

\begin{corollary}\label{c:ginzburg-algebra}
Let $(Q,W)$ be a quiver with potential. Let $\ca_{(Q,W)}$ be the
Kontsevich--Soibelman $A_\infty$-category (\cite[Section
3.3]{KontsevichSoibelman08}) (or its enveloping dg category), let
$\widehat{\Gamma}_{(Q,W)}$ be the complete Ginzburg dg category
(\cite[Section 5]{Ginzburg06}), and let $\widetilde{C}_{(Q,W)}$ be
the `unbounded version' of the generalized cluster category
(\cite[Remark 4.1]{KellerYang10}). Then there is a recollement of
triangulated categories

\[\xymatrix{\widetilde{C}_{(Q,W)}\ar[rr]&&\cd\widehat{\Gamma}_{(Q,W)}\ar[rr]\ar@/^20pt/[ll]\ar@/_20pt/[ll]&&\cd\ca_{(Q,W)}\ar@/^20pt/[ll]\ar@/_20pt/[ll]}.\]
\vspace{0pt}
\end{corollary}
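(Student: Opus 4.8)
The plan is to deduce Corollary~\ref{c:ginzburg-algebra} from Theorem~\ref{t:recollement} by checking that the pair $(\ca_{(Q,W)},\cu)$, with $\cu=\{\bar{A}\mid A\in\ca_{(Q,W)}\}$ the collection of simple modules, fits the hypotheses of the augmented-category setup preceding Corollary~\ref{c:koszul-duality}, and then by identifying each of the three resulting derived categories with the named objects. First I would verify that $\ca_{(Q,W)}$ is augmented in the required sense: its objects are the vertices of $Q$, which are pairwise non-isomorphic, and each carries the one-dimensional simple module $\bar{A}$ concentrated in degree $0$, so that $H^0\bar{A}(A)\cong k$ and the higher (and off-diagonal) cohomologies vanish. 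This is exactly the `exterior' situation of~\cite[Section 10.5]{Keller94}, so condition~(\ref{condition}) holds and Theorem~\ref{t:recollement} applies verbatim, producing a recollement of $\cd\cb$ in terms of $\cd\ca_{(Q,W)}$ and a third term $\cd\cc$.

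The key identification is that the standard lift $\cb=\ca_{(Q,W)}^*$, the Koszul dual, is quasi-equivalent to the complete Ginzburg dg category $\widehat{\Gamma}_{(Q,W)}$. I would argue this by comparing generators: $\cb$ is built from $\ck$-projective resolutions of the simples $\bar{A}$, and its morphism complexes compute $\RHom_{\ca_{(Q,W)}}(\bar{A},\bar{A'})$, i.e.\ the $A_\infty$-Koszul dual of $\ca_{(Q,W)}$. The content of the Kontsevich--Soibelman construction is precisely that this Koszul dual recovers the Ginzburg dg (co)algebra; concretely one matches the quiver with loops and the degree-$(-1)$ and degree-$(-2)$ arrows of $\widehat{\Gamma}_{(Q,W)}$ against the $\Ext$-groups between simples, the Ginzburg differential against the $A_\infty$-structure coming from $W$, and the completion against the (co)completeness of the bar construction. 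Granting this quasi-equivalence, $\cd\cb\simeq\cd\widehat{\Gamma}_{(Q,W)}$ canonically, and the middle and right terms of the recollement become $\cd\widehat{\Gamma}_{(Q,W)}$ and $\cd\ca_{(Q,W)}$ as claimed.

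It remains to identify the left term $\cd\cc$ with $\widetilde{C}_{(Q,W)}$. Here I would invoke the discussion of Section~2 (the Left term): $\cd\cc$ is triangle equivalent to the Verdier quotient $\cd\cb/\Tria\cv$, where $\cv=\{(X^T)^A\}$ and $\Tria\cv$ is the localizing subcategory generated by the image of $\per\ca_{(Q,W)}$ under $\mathbf{R}H_X$. Under the equivalence $\cd\cb\simeq\cd\widehat{\Gamma}_{(Q,W)}$, the subcategory $\Tria\cv$ corresponds to the localizing subcategory generated by the compact objects, i.e.\ $\per\widehat{\Gamma}_{(Q,W)}$ sits inside as $(\Tria\cv)^c=\tria\cv$, and the quotient $\cd\widehat{\Gamma}_{(Q,W)}/\per\widehat{\Gamma}_{(Q,W)}$ is, by definition in~\cite[Remark 4.1]{KellerYang10}, the unbounded generalized cluster category $\widetilde{C}_{(Q,W)}$. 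Matching the three functors to the recollement arrows then finishes the proof.

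The main obstacle is the middle identification $\ca_{(Q,W)}^*\simeq\widehat{\Gamma}_{(Q,W)}$: establishing that the $A_\infty$-Koszul dual of the Kontsevich--Soibelman category is the completed Ginzburg dg category requires care with the $A_\infty$-minimal-model/bar-cobar formalism, with signs and degree conventions in the Ginzburg differential, and with the role of the potential $W$ in producing the cyclic $A_\infty$-structure; the completion on the Ginzburg side must be matched to the passage from the bar construction to its dual, and one must check this is not merely an equivalence of homotopy categories but a quasi-equivalence compatible with the simple modules so that the recollement transports correctly.
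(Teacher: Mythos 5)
Your setup follows the paper's route: you take $\cu$ to be the simples, note condition~(\ref{condition}) holds (the paper gets it from finite-dimensionality of $\ca_{(Q,W)}$; your `exterior case' reasoning is compatible), apply Theorem~\ref{t:recollement} via Corollary~\ref{c:koszul-duality}, and identify the standard lift $\cb$ with $\widehat{\Gamma}_{(Q,W)}$ as the Koszul dual --- an identification the paper also asserts without the bar--cobar bookkeeping you flag as the main obstacle. Up to this point the proposal is sound.

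However, your identification of the left term contains a genuine error. Writing $\Gamma=\widehat{\Gamma}_{(Q,W)}$, you claim that under $\cd\cb\simeq\cd\Gamma$ the subcategory $\Tria\cv$ corresponds to the localizing subcategory generated by the compact objects, with $(\Tria\cv)^c=\tria\cv$ identified with $\per\Gamma$, and that $\widetilde{C}_{(Q,W)}$ is by definition $\cd\Gamma/\per\Gamma$. Neither claim is correct. The localizing subcategory generated by the compact objects of $\cd\Gamma$ is all of $\cd\Gamma$ (the free modules $i^{\wedge}$ are compact generators), so on your reading the quotient would vanish; and $\per\Gamma$ is not closed under set-indexed coproducts, so it cannot be the kernel of the coproduct-preserving functor $j^*$ in any case. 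The definition in~\cite[Remark 4.1]{KellerYang10} is $\widetilde{C}_{(Q,W)}=\cd\Gamma/\cd_0\Gamma$, where $\cd_0\Gamma$ is the localizing subcategory generated by the finite-dimensional (equivalently, simple) dg modules --- not by $\per\Gamma$. The objects $(X^T)^i=\mathbf{R}H_X(i^{\wedge})$ are indeed compact in $\cd\Gamma$ (that is what condition~(\ref{condition}) buys), but they are not the free modules: Koszul duality interchanges projectives and simples. The step your proposal is missing, and which is the actual content of the paper's proof, is the computation that $(X^T)^i\cong\Sigma^{-3}S_i$ in $\cd\Gamma$, where $S_i$ is the simple top of $i^{\wedge}$ (the shift by $-3$ reflecting the $3$-Calabi--Yau nature of the Ginzburg category). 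From this one gets $\Tria\cv=\Tria(S_i,\,i\in Q_0)=\cd_0\Gamma$, whence the left term is $\cd\Gamma/\cd_0\Gamma=\widetilde{C}_{(Q,W)}$ by definition. A quick sanity check exposes the error in your version: if $\tria\cv$ were $\per\Gamma$, then $\per\Gamma/\tria\cv$, and with it the generalized cluster category, would vanish for every $(Q,W)$, which is false.
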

\begin{proof}
Let $\ca=\ca_{(Q,W)}$ and let $\cu$ be the category of simple
$\ca$-modules. Then condition (\ref{condition}) holds since $\ca$ is
finite-dimensional, and there is a standard lift $(\cb,X)$ such that
the dg category $\Gamma=\widehat{\Gamma}_{(Q,W)}$ (as the Koszul
dual of $\ca_{(Q,W)}$) is quasi-isomorphic to $\cb$. By
Corollary~\ref{c:koszul-duality}, there is a recollement with the
middle term being $\cd\Gamma$, the right term being $\cd\ca$, and
the right upper functor being $\mathbf{L}T_{X^T}$. It remains to
prove that the left term of this recollement is triangle equivalent
to $\widetilde{\cc}_{(Q,W)}$. Object sets of $\ca$, of $\cu$, and of
$\Gamma$ can all be identified with the vertice set $Q_0$ of the
quiver $Q$. For a vertex $i$ of $Q$, considered as an object of
$\ca$, the right dg $\Gamma$-module $(X^T)^i$ is isomorphic in
$\cd\Gamma$ to $\Sigma^{-3}S_i$, where $S_i$ the simple top of the
free
$\Gamma$-module $i^{\wedge}$. 
Thus the essential image of $\mathbf{L}T_{X^T}$ is the localizing
subcategory $\cd_0\Gamma=\Tria(S_i,i\in Q_0)$ of $\cd\Gamma$
generated by the $S_i$, $i\in Q_0$. Thus the left term of the
recollement is triangle equivalent to the triangle quotient
$\cd\Gamma/\cd_0\Gamma$, which is by definition
$\widetilde{\cc}_{(Q,W)}$.
\end{proof}

Let $A$ be a finite-dimensional basic $k$-algebra. Let $S$ be the
direct sum of the objects in a set of representatives of isomorphism
classes of simple $A$-modules, and let $X$ be a projective
resolution of $S$. Then $A^*=\Dif A(X,X)$ is the Koszul dual of $A$.

\begin{corollary}[\cite{Krause05}]\label{c:self-injective-algebra}
Let $A$ be a finite-dimensional basic self-injective $k$-algebra,
and $A^*$ its Koszul dual. Let $\underline{\Mod}A$ be the stable
category of the category $\Mod A$ of $A$-modules. Then there is a
recollement of triangulated categories

\[\xymatrix{\underline{\Mod}A\ar[rr]&&\cd(A^*)\ar[rr]\ar@/^20pt/[ll]\ar@/_20pt/[ll]&&\cd(A)\ar@/^20pt/[ll]\ar@/_20pt/[ll]}.\]
\vspace{0pt}
\end{corollary}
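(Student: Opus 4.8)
The plan is to derive Corollary~\ref{c:self-injective-algebra} as a special case of Corollary~\ref{c:koszul-duality}, exactly as was done for the Ginzburg-category case in Corollary~\ref{c:ginzburg-algebra}. First I would take $\ca=A$ (viewed as a dg $k$-algebra concentrated in degree zero) and let $\cu$ be the category of simple $A$-modules, so that the standard lift $(\cb,X)$ has $\cb$ quasi-isomorphic to the Koszul dual $A^*=\Dif A(X,X)$. Since $A$ is finite-dimensional, $\per A$ is generated by the simples and condition~(\ref{condition}) holds, so Corollary~\ref{c:koszul-duality} applies and yields a recollement whose middle term is $\cd(A^*)$ and whose right term is $\cd(A)$. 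As in the discussion opening this section, the left term is triangle equivalent to the triangle quotient $\cd(A^*)/\Tria\cv$, where $\cv=\{(X^T)^A\mid A\in\ca\}$ and $\Tria\cv$ is the essential image of $j_!=\mathbf{L}T_{X^T}$.

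The remaining work is therefore to identify this quotient $\cd(A^*)/\Tria\cv$ with the stable module category $\underline{\Mod}A$. The key input is self-injectivity: when $A$ is self-injective, $\Mod A$ is a Frobenius category, its stable category $\underline{\Mod}A$ is triangulated, and by a theorem of Rickard (or equivalently by Buchweitz/Keller--Vossieck) one has $\underline{\Mod}A\simeq\cd_{sg}(A)=\cd^b(\mod A)/\per A$, the singularity category. The plan is to match $\cd(A^*)/\Tria\cv$ against this description. I would first observe, paralleling the Ginzburg case, that the objects $(X^T)^A$ of $\cv$ are, up to shift, the simple $A^*$-modules $S_i$ sitting at the tops of the free $A^*$-modules; hence $\Tria\cv=\cd_0(A^*)$ is the localizing subcategory of $\cd(A^*)$ generated by the simples. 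Thus the left term is the `unbounded' quotient $\cd(A^*)/\cd_0(A^*)$.

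The main obstacle I expect is precisely this identification of $\cd(A^*)/\cd_0(A^*)$ with $\underline{\Mod}A$, which is the content of Krause's result~\cite{Krause05} cited in the statement; the cleanest route is to invoke it directly. Concretely, I would use the equivalence from this section reading
\[
\xymatrix{\tria\cu/\per\ca\ar[rr]^{\sim}&&\per\cb/\tria\cv},
\]
together with Neeman's compact-generation comparison~\cite[Theorem 2.1]{Neeman92a} (as applied earlier), to reduce the problem to the bounded level: the compact objects of $\cd(A^*)/\cd_0(A^*)$ form the idempotent completion of $\per(A^*)/\tria\cv\cong\tria\cu/\per A$. Here $\tria\cu=\cd^b(\mod A)$ because $A$ is finite-dimensional, so $\tria\cu/\per A$ is exactly the singularity category $\cd_{sg}(A)$, which for self-injective $A$ is $\underline{\mod}A$ and is already idempotent-complete. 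Passing back to the unbounded setting gives $\cd(A^*)/\cd_0(A^*)\simeq\underline{\Mod}A$, completing the identification of the left term and hence the recollement.
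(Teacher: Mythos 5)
Your overall skeleton matches the paper's: apply Corollary~\ref{c:koszul-duality}, identify the left term with $\cd(A^*)/\Tria X^T$, reduce to compact objects via~\cite[Theorem 2.1]{Neeman92a}, and use Rickard's stable equivalence $\underline{\mod}A\simeq\cd^b(\mod A)/\per A$ together with the Koszul duality equivalence $\cd^b(\mod A)\simeq\per A^*$ on the bounded level. But your final step --- ``passing back to the unbounded setting gives $\cd(A^*)/\cd_0(A^*)\simeq\underline{\Mod}A$'' --- is a genuine gap. A triangle equivalence between the subcategories of compact objects of two compactly generated triangulated categories does not, by itself, yield an equivalence of the ambient categories: the d\'evissage lemma you would want to invoke (\cite[Lemma 4.2]{Keller94}) applies to a given coproduct-preserving triangle functor between the big categories that restricts to an equivalence on compacts, and no such functor has been produced by your argument. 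This is exactly the point where the paper does real work: it constructs an explicit functor $\bar{F}:\underline{\Mod}A\rightarrow\cd(A^*)/\Tria X^T$ by composing $\Mod A\hookrightarrow\cd(A)\xrightarrow{\RHom_A(X,?)}\cd(A^*)\rightarrow\cd(A^*)/\Tria X^T$, checks that this composition commutes with infinite direct sums (this uses that $X$ can be chosen with finitely generated projective components in each degree --- a point absent from your proposal), checks that it kills $A$ because $\RHom_A(X,A)\cong X^T\in\Tria X^T$ so that it factors through the stable category, records that $\underline{\Mod}A$ is compactly generated with $(\underline{\Mod}A)^c=\underline{\mod}A$, identifies $\bar F|_{\underline{\mod}A}$ with the composite of Rickard's equivalence and the bounded Koszul duality, and only then applies~\cite[Lemma 4.2]{Keller94} to conclude $\bar F$ is an equivalence. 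Without constructing $\bar F$ (or alternatively carrying out the comparison with Krause's recollement via the equivalence $\Theta:\ch(\Inj A)\to\cd(A^*)$, as in the paper's subsequent remark), the identification of the left term is not established.

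A secondary, smaller issue: you assert, ``paralleling the Ginzburg case,'' that the objects $(X^T)^A$ are shifts of the simple $A^*$-modules, so that $\Tria\cv=\cd_0(A^*)$. In the Ginzburg case this rested on a specific computation ($(X^T)^i\cong\Sigma^{-3}S_i$, reflecting the $3$-Calabi--Yau structure), and it does not transfer for free; for self-injective $A$ one would argue via $H^*\bigl((X^T)^{e_iA}\bigr)\cong\Ext_A^*(S,e_iA)$, which is concentrated in degree $0$ and equals $\mathrm{soc}(e_iA)$, a simple module (up to the Nakayama permutation), since $e_iA$ is injective. You give no such argument. Note that the paper sidesteps this entirely by working with $\Tria X^T$ directly, never identifying it with the subcategory generated by simples; your unproved identification is thus an unnecessary detour even where it is correct.
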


\begin{proof} Let $\mod A$ be the category of finite-dimensional $A$-modules,
and $\underline{\mod}A$ its stable category. As a triangulated
subcategory of $\cd(A)$, the bounded derived category $\cd^b(\mod
A)$ of $\mod A$ coincides with $\tria S$. Recall that the essential
image of $?\lten_A X^T$ is $\Tria X^T$. Consider the following
commutative diagram
\[\xymatrix{\Mod A\ar@{^{(}->}[r] & \cd(A) \ar[rr]^{\RHom_A(X,?)} && \cd(A^*)\ar[r] & \cd(A^*)/\Tria X^T\\
\mod A\ar@{^{(}->}[r]\ar@{^{(}->}[u] & \cd^b(\mod
A)\ar@{^{(}->}[u]\ar[rr]^{\sim} && \per A^* \ar@{^{(}->}[u]\ar[r] &
\per A^*/\tria X^T\ar@{^{(}->}[u]\\
& \per A \ar@{^{(}->}[u]\ar[rr]^{\sim} && \tria X^T\ar@{^{(}->}[u] &
}\] where the leftmost horizontal functors are the canonical
embeddings, and the rightmost horizontal functors are the canonical
projections. The restriction of $\RHom_A(X,?)$ on $\Mod A$ commutes
with infinite direct sums, because $X$ can be chosen such that its
component in each degree is a finitely generated projective
$A$-module. Therefore the composition of the three functors in the
first row, denoted by $F$, commutes with infinite direct sums. Since
$\RHom_A(X,A)\cong X^T$ belongs to $\Tria X^T$, it follows that $F$
factors through the stable category $\underline{\Mod}A$. In this
way, we obtain a triangle functor
\[\bar{F}:\underline{\Mod}A\rightarrow \cd(A^*)/\Tria X^T,\]
which commutes with infinite direct sums. It is known that
$\underline{\Mod}A$ is compactly generated by $\underline{\mod}A$
and $(\underline{\Mod}A)^c=\underline{\mod}A$. Moreover,  the
restriction $\bar{F}|_{\underline{\mod}A}$ is the composition of the
following three functors
\[\xymatrix{\underline{\mod}A\ar[r] & \cd^b(\mod A)/\per A\ar[r]^{\sim} & \per A^*/\tria X^T\ar@{^{(}->}[r] & \cd(A^*)/\Tria X^T}.\]
The first functor is also an equivalence (\cite[Theorem
2.1]{Rickard89b}). Therefore $\bar{F}$ induces a triangle
equivalence between $\underline{\mod}A=(\underline{\Mod}A)^c$ and
$\per A^*/\tria X^T=(\cd(A^*)/\Tria X^T)^c$. By~\cite[Lemma
4.2]{Keller94}, $\bar{F}$ itself is an equivalence. Now applying
Corollary~\ref{c:koszul-duality} we obtain the desired recollement.
\end{proof}

\begin{remark*} Let $\ch(\Inj A)$ be the homotopy category of
injective $A$-modules and $\ch_{ac}(\Inj A)$ be its full subcategory
of acyclic complexes. Applying a result of Krause~\cite[Corollary
4.3]{Krause05} to the algebra $A$, we obtain a recollement of
$\ch(\Inj A)$ in terms of $\cd(A)$ and $\ch_{ac}(\Inj A)$ with the
right middle functor being the canonical projection $Q:\ch(\Inj
A)\rightarrow \cd(A)$. We claim that this recollement is equivalent
to the one in Corollary~\ref{c:self-injective-algebra}. Indeed,
Krause proved in~\cite{Krause05} that $\ch(\Inj A)$ is compactly
generated by (an injective resolution of) the $A$-module $S$, and
that there is a triangle equivalence $\Theta:\ch(\Inj
A)\rightarrow\cd(A^*)$ taking $S$ to $A^*$. Since both
$\Theta(?)\lten_{A^*} X$ and $Q$ commute with infinite direct sums
and $\Theta(S)\lten_{A^*} X\cong X\cong S$, it follows that they are
isomorphic. Namely, the right middle parts of the two recollements
are equivalent via the equivalence $\Theta$. Therefore the two
recollements are equivalent.

Now let us construct the equivalence $\Theta$ by sketching the proof
of the assertion that $\ch(\Inj A)$ and $\cd(A^*)$ are triangle
equivalent. Let $\Dif_{\Inj}A$ be the full dg subcategory of $\Dif
A$ consisting of complexes of injective $A$-modules, let
$\mathbf{i}S$ be an injective resolution of the $A$-module $S$, and
put $B=\Dif A(\mathbf{i}S,\mathbf{i}S)$. Then the dg $B^{op}\ten
A^*$-module $\Dif A(X,\mathbf{i}S)$ yields a triangle equivalence
$\Phi:\cd(B)\rightarrow\cd(A^*)$ (see~\cite[Section 7.3]{Keller94}).
Moreover, $\Dif_{\Inj}A$ is a dg enhancement of the triangulated
category $\ch(\Inj A)$ in the sense of
Bondal--Kapranov~\cite{BondalKapranov90}, and there is a dg functor
$\Dif_{\Inj} A(\mathbf{i}S,?):\Dif_{\Inj} A\rightarrow\Dif B$.
Taking zeroth comhomologies gives us a triangle functor $\ch(\Inj
A)\rightarrow\ch(B)$, and composing it with the canonical projection
$\ch(B)\rightarrow\cd(B)$ we obtain a triangle equivalence
$\Psi:\ch(\Inj A)\rightarrow\cd(B)$ (\confer the proof
of~\cite[Theorem 4.3]{Keller94}). Now the composition
$\Theta=\Phi\circ\Psi:\ch(\Inj A)\rightarrow\cd(A^*)$ is a triangle
equivalence, which takes $\mathbf{i}S$ to $A^*$ (up to isomorphism),
as desired.
\end{remark*}

\noindent{ACKNOWLEDGEMENT.} The author gratefully acknowledges
financial support from Max-Planck-Institut f\"ur Mathematik in Bonn.
He thanks Pedro Nicol\'as and the referee for some helpful remarks
on a previous version. Part of the paper was written during the
author's visit to Department of Mathematics at Shanghai Jiaotong
University, he thanks Guanglian Zhang for his warm hospitality.



\def\cprime{$'$}
\providecommand{\bysame}{\leavevmode\hbox
to3em{\hrulefill}\thinspace}
\providecommand{\MR}{\relax\ifhmode\unskip\space\fi MR }
\providecommand{\MRhref}[2]{%
  \href{http://www.ams.org/mathscinet-getitem?mr=#1}{#2}
} \providecommand{\href}[2]{#2}


\end{document}